\documentclass{article}
\usepackage{latexsym}
\usepackage{epsfig}
\usepackage{amsmath}
\usepackage{amssymb}
\usepackage{amsthm}
\usepackage{caption}
\newtheorem{theorem}{Theorem}[section]

\newtheorem{lemma}[theorem]{Lemma}
\newtheorem{conjecture}[theorem]{Conjecture}

\theoremstyle{remark}

\theoremstyle{definition}

\begin{document}

\title{Necessary conditions for the existence of exponential-polynomial expansions for solutions of certain differential equations}

\author{Rahaf Habib (rahoufhabib@gmail.com) \\ Roland Hildebrand  (khildebrand.r@mipt.ru) \\ MIPT, FPMI, 141701 Dolgoprudny}

\maketitle

\begin{abstract}
We consider ordinary differential equations (ODE) of the form $u''u - (u')^2 = e^{-x}P(u) - 1$, where $P$ is a polynomial. For $P = u^k$, $k = 3,4,6$ this ODE is equivalent to certain degenerate Painlev\'e III equations. We study whether families of solutions of these ODEs have asymptotic expansions of the form $u(x) = \sum_{k=0}^{\infty} p_k(x+c)e^{-kx}$ for $Re\,x \to +\infty$, where $c \in \mathbb C$ is an arbitrary constant parameterizing the solution family, $p_k$ are polynomials, with $p_0(x) = x$. We find necessary conditions on $P$ for such expansions to exist. Numerical experiments suggest that these conditions are also sufficient, and the expansions are not only formal, but actually provide a series representation of the solutions. Numerical evidence also suggests a conjecture on the nonnegativity of coefficients of the $p_k$.
\end{abstract}

\emph{Keywords}: asymptotic expansion, Painlev\'e transcendent, trans-series, exponential-polynomial.

\section{Introduction}

Painlev\'e transcendents appear in many areas of mathematics and mathematical physics, e.g., in the geometry of surfaces \cite{BobenkoAitner00book}. Often one is interested in specific families of solutions of these second-order ordinary differential equations (ODEs), in particular, those with pole-free regions. In this work we study solutions of an ODE which is obtained as a generalization of the degenerate Painlev\'e III equation. Namely, several instances of this equation can be transformed into members of a more general family of ODEs given by
\begin{equation} \label{generalODE}
    u''u - (u')^2 = e^{-x}P(u) - 1,
\end{equation}
where $P$ is a polynomial. The specific solutions $y(t)$ of the Painlev\'e equation transform to solutions of \eqref{generalODE} with asymptotics
\begin{equation} \label{roughAsymptotics}
    u(x) \sim x + c, \qquad x \to +\infty,
\end{equation}
where $c \in \mathbb C$ is an arbitrary constant parametrizing the family of solutions. The limit $x \to +\infty$ corresponds to the regular singular point $t = 0$ of the Painlev\'e equation.

In this work we are concerned with the following question: Can the asymptotics \eqref{roughAsymptotics} be refined into a formal series? Inserting $u(x) = x + c$ into \eqref{generalODE} leads to an exponentially small residue, suggesting that the solution is of the form $u(x) = x + c + e^{-x}p_1(x+c)$, with $p_1$ a power series. For generic polynomials $P$, substitution into the ODE yields a divergent formal power series. For certain polynomials $P$, however, including those corresponding to the transformed Painlev\'e equations, the power series is finite, hence actually a polynomial. Inserting this polynomial, we get another correction term, this time of the form $e^{-2x}p_2(x+c)$, where $p_2$ is another power series. This series can again be finite or infinite. In the former case, a correction term of the form $e^{-3x}p_3(x+c)$ can be fitted and so on.

If we do not end up with an infinite power series at some step, the formal series representing the solution will be of the form
\begin{equation} \label{fullAsymptotics}
    u(x) = \sum_{k=0}^{\infty} p_k(x+c)e^{-kx},
\end{equation}
where $c$ is the constant from \eqref{roughAsymptotics} and $p_k$ are polynomials, with $p_0(x) = x$. In Theorem \ref{thm:main} we provide necessary conditions on $P$ for a representation of the form \eqref{fullAsymptotics} to exist. 

It could be expected that at each level $k$ new conditions on the polynomial $P$ arise to ensure finiteness of the power series $p_k$. Computations and numerical experiments suggest, however, that for $k \geq 4$ the conditions found on $P$ at the previous levels are already sufficient to ensure that $p_k$ is a polynomial. If true, this entails that for polynomials $P$ from a subspace of co-dimension 2 in the space of all polynomials solutions with exponential-polynomial expansions of the form \eqref{fullAsymptotics} exist. This subspace contains the polynomials $P$ for which ODE \eqref{generalODE} is equivalent to a Painlev\'e equation.

In \cite[Conjecture C.1]{KitaevVartanianArxiv25} it has been already suggested that such series exist for special families of solutions of other degenerate Painlev\'e III equations. In the general case the $p_k$ will have infinitely many summands \cite{Kitaev25}. Convergence of exponential-polynomial series has been studied, e.g., in the papers \cite{Krivosheyeva13,Krivosheevy22}. 
The ensemble of solutions (the space of initial conditions) of the degenerate Painlev\'e III equations has been studied in detail in \cite{OKSO06}. There, in particular, types $D_7$ and $D_8$ of this ODE are distinguished. The asymptotic behaviour of the solutions has been studied in a series of papers \cite{Kitaev87,KitaevVartanian10,Kitaev19,KitaevVartanian24,Kitaev25,Vartanian25}.

The remainder of the paper is structured as follows. In Section \ref{sec:Painleve} we show how certain Painlev\'e III equations can be transformed into an ODE of the form \eqref{generalODE}. In Section \ref{sec:symmetries} we consider the symmetries of ODE \eqref{generalODE}. For generic $P$ there are only relatively simple transformations, but for the polynomials $P$ corresponding to the Painlev\'e transcendents more complicated symmetries exist. In Section \ref{sec:specialSolutions} we obtain solutions of \eqref{generalODE} for the quadratic polynomial $P(t) = t^2 + t$ together with the corresponding series \eqref{fullAsymptotics}, which seems to be the only case with an explicit solution. In Section \ref{sec:main} we deduce necessary conditions on $P$ for expansions of the form \eqref{fullAsymptotics} to exist, namely that $P(0) = 0$ and the coefficients at $t,t^2$ in $P(t)$ are equal. We also compute the degree and the leading coefficients of the polynomials $p_k$ in Lemma \ref{lem:leadingCoefficient}. In Section \ref{sec:experiments} we compute the first polynomials $p_k$ for the transformed Painlev\'e transcendents and comment on their pole-free regions. In Section \ref{sec:outlook} we formulate some conjectures which are suggested by the numerical experiments, in particular, on the nonnegativity of the coefficients of the $p_k$. Since this conjecture applies to the case of the Painlev\'e transcendents, this would imply corresponding positivity properties for their expansions.

\section{Transformation of Painlev\'e equations} \label{sec:Painleve}

Following \cite{OKSO06}, we present the degenerate Painlev\'e III equations and deduce their equivalence to ODE \eqref{generalODE}. The Painlev\'e equations are non-linear second order ODEs which contain several complex parameters. By multiplicative scaling of the independent and the dependent variable one may, however, reduce some or all of the parameters to canonical values.

\subsection{Painlev\'e III type $D_7$} \label{sec:D7transform}

The degenerate Painlev\'e III equation of type $D_7$ has the form
\[ ty\ddot y = t\dot y^2 - y\dot y + \alpha y^3 + \beta y + \delta t,
\]
with $\alpha\delta \not= 0$. Scaling acts on the parameters by the transformation $(\alpha,\beta,\delta) \gets (ab\alpha,ab^{-1}\beta,a^2b^{-2}\delta)$, where $ab \not= 0$. The quantity $\delta^{-1}\beta^2$ is an invariant of this transformation. Equations with different invariants are not equivalent.

The differential equation
\begin{equation} \label{ODE3}
    u''u - (u')^2 = e^{-x}u^3 - 1
\end{equation} 
can be transformed into the Painlev\'e III equation of type $D_7$ with $\delta^{-1}\beta^2 = 0$. By the transformations $u = e^{x/2}y$, $t = e^{-x/2}$ we obtain
\begin{equation} \label{D7equation}
ty\ddot y = t\dot y^2 - y\dot y + 4y^3 - 4t,    
\end{equation} 
which is the degenerate Painlev\'e III equation with parameters $(\alpha,\beta,\delta) = (4,0,-4)$. It has the algebraic solution $y = t^{1/3}$, which corresponds to the solution $u = e^{x/3}$ of \eqref{ODE3}. Solutions of \eqref{ODE3} with asymptotics $u \sim x + c$ as $x \to +\infty$ correspond to Painlev\'e transcendents with asymptotics $y \sim t \cdot (-2\log\,t + c)$ as $t \to +0$.


The differential equation
\begin{equation} \label{ODE6}
    u''u - (u')^2 = e^{-x}u^6 - 1
\end{equation} 
can also be transformed into equation \eqref{D7equation}. The corresponding  substitution is given by $u = \sqrt{\frac{8}{ty}}$, $x = -4\log\,t+9\log\,2$. The algebraic solution $y = t^{1/3}$ corresponds to the solution $u = e^{x/6}$ of \eqref{ODE6}. Solutions of \eqref{ODE6} with asymptotics $u \sim x + c$ as $x \to +\infty$ correspond to solutions of \eqref{D7equation} with asymptotics $y \sim 8t^{-1}\cdot (-4\log\,t+9\log\,2+c)^{-2}$ as $t \to +0$.

\subsection{Painlev\'e III type $D_8$}

The degenerate Painlev\'e III equation of type $D_8$ has the form
\begin{equation} \label{D8equation}
    ty\ddot y = t\dot y^2 - y\dot y + \alpha y^3 + \beta y
\end{equation} 
with $\alpha\beta \not= 0$. As for the $D_7$ case, scaling acts by $(\alpha,\beta) \gets (ab\alpha,ab^{-1}\beta)$. It may transform $(\alpha,\beta)$ to any given pair of non-zero numbers. The equation possesses the constant solutions $y \equiv \pm\sqrt{-\beta/\alpha}$.

The differential equation
\begin{equation} \label{symmetricEquation}
    u''u - (u')^2 = e^{-x}u^4 - 1,
\end{equation} 
which is \eqref{generalODE} with $P(t) = t^4$, is by the transformation $y = u^2e^{-x/2}$, $t = e^{-x/2}$ equivalent to the Painlev\'e III equation of type $D_8$ with parameters $(\alpha,\beta) = (8,-8)$. The constant solutions $y \equiv \pm 1$ correspond to the solutions $u = \epsilon_4^k \cdot e^{x/4}$, where $\epsilon_4 = i$ is the 4-th root of 1 and $k \in \mathbb Z$. Solutions with asymptotics $u \sim x + c$ as $x \to +\infty$ correspond to Painlev\'e transcendents with asymptotics $y \sim t \cdot (-2\log t+c)^2$ as $t \to +0$. 

\section{Symmetries of the ODE} \label{sec:symmetries}

Solutions of ODE \eqref{generalODE} may by a change of the dependent and independent variables be transformed into other solutions of ODE \eqref{generalODE} with the same or with another polynomial $P$. Below we consider a more trivial transformation valid for general $P$, and a transformation which applies only to the Painlev\'e transcendents considered in the previous section.

\subsection{Linear change of variables}

Consider the change of variables 
\[ \tilde x = x + \delta, \qquad \tilde u = \epsilon u,
\]
where $\delta \in \mathbb C$, $\epsilon = \pm1$. Then if $u(x)$ is a solution of ODE \eqref{generalODE}, we have that $\tilde u(\tilde x)$ is a solution of the ODE
\[ \frac{d^2\tilde u}{d\tilde x^2}\tilde u - \left( \frac{d\tilde u}{d\tilde x} \right)^2 = e^{-\tilde x+\delta}P(\epsilon\tilde u) - 1.
\]
This ODE is also of the form \eqref{generalODE} with a polynomial $Q(t) = e^{\delta}P(\epsilon t)$ instead of $P$. If $\delta,\epsilon$ are such that $P = Q$, then the transformation defines a symmetry of the ODE. In particular, if $u(x)$ is a solution, then $\tilde u(x) = u(x + 2\pi ik)$ is also a solution for every $k \in \mathbb Z$.

If $\epsilon = 1$, then a solution $u(x)$ with asymptotics $u \sim x + c$ is transformed into a solution $\tilde u(\tilde x)$ with asymptotics $\tilde u \sim \tilde x + (c - \delta)$, i.e., it is still in the family of interest. We may hence normalize the polynomial $P$ without loss of generality by a multiplicative constant, e.g., make it monic (i.e., leading coefficient equals 1). 

If $\epsilon = -1$, then the asymptotics of the solution is different. However, all results deduced in this work for solutions with asymptotics $u \sim x + c$ hold also in a correspondingly modified form for solutions with asymptotics $u \sim -x+c$ by means of this symmetry. If $P$ is even, then a solution $u(x)$ gives rise to another solution $\tilde u(x) = -u(x)$. If $P$ is odd, then a solution $u(x)$ gives rise to another solution $\tilde u(x) = -u(x + i\pi)$. 

\subsection{Non-linear change of variables}

Consider the change of variables
\[ \tilde x = s^{-1}x + b, \qquad \tilde u = ae^{x/2}u^{-s},
\]
where $s,a,b$ are fixed, $as \not= 0$, and let $P(t) = t^k$. Then $u = a^{1/s}e^{x/(2s)}\tilde u^{-1/s}$, $x = s(\tilde x - b)$, and
\begin{align*}
    \frac{d^2\tilde u}{d\tilde x^2}\tilde u - \left( \frac{d\tilde u}{d\tilde x} \right)^2 &= -a^2s^3e^x(u''u - (u')^2)u^{-2(s + 1)} = -a^2s^3e^x(e^{-x}u^k - 1)u^{-2(s + 1)} \\ &= a^2s^3 \left( e^x(a^{1/s}e^{x/(2s)}\tilde u^{-1/s})^{-2(s + 1)} - u^{k-2(s + 1)} \right) \\ &= a^2s^3 \left( a^{-2(1 + 1/s)}e^{-(\tilde x - b)}\tilde u^{2(1 + 1/s)} - u^{k-2(s + 1)} \right).
\end{align*} 
Choosing $a = s^{-3/2}$, $k = 2(s+1)$ we obtain
\[ \frac{d^2\tilde u}{d\tilde x^2}\tilde u - \left( \frac{d\tilde u}{d\tilde x} \right)^2 = a^{-2(1 + 1/s)}e^be^{-\tilde x}\tilde u^{2(1 + 1/s)} - 1.
\]
Choose further $b = 2(1 + s^{-1})\log\,a$, then
\[ \frac{d^2\tilde u}{d\tilde x^2}\tilde u - \left( \frac{d\tilde u}{d\tilde x} \right)^2 = e^{-\tilde x}Q(\tilde u) - 1
\]
where $Q(t) = t^l$ with $l = 2(1 + s^{-1})$.

There are three possibilities for the choice of $s$ such that $k,l$ are both positive integers, namely $s = \frac12,1,2$. Let us consider these cases separately.

\medskip

\textit{Case $s = 1$:} Then
\[ \tilde x = x, \qquad \tilde u = \frac{e^{x/2}}{u},\qquad P(t) = Q(t) = t^4.
\]
This case corresponds to a well-known symmetry of the Painlev\'e III type $D_8$ equation. If $y(t)$ is a solution of \eqref{D8equation} with $(\alpha,\beta) = (8,-8)$, then 
\[ \tilde y(t) = y(t)^{-1}
\]
is also a solution of this equation.

\textit{Case $s = \frac12$:} Then
\[ \tilde x = 2x + 9\log\,2, \qquad \tilde u = 2^{3/2}e^{x/2}u^{-1/2}, \qquad P(t) = t^3, \qquad Q(t) = t^6.
\]
Both polynomials $P,Q$ correspond to the same Painlev\'e equation of type $D_7$. If $u(x)$ has asymptotics $u \sim x + c$ for $x \to +\infty$, then $\tilde u$ has asymptotics $\tilde u \sim 2^{-1/4}e^{\tilde x/4}(\tilde x - 9\log\,2 + 2c)^{-1/2}$. Hence the solutions of \eqref{generalODE} with $P(t) = t^3$ and asymptotics $u \sim x + c$ represent a family of Painlev\'e transcendents which is different from the one represented by the solutions of \eqref{generalODE} with $P(t) = t^6$ and asymptotics $u \sim x + c$. One may hence obtain asymptotic exponential-polynomial series for these two families.

\textit{Case $s = 2$:} Then $P(t) = t^6$, $Q(t) = t^3$, and this transformation is the inverse of the transformation considered in the previous case.

\section{Explicit solutions} \label{sec:specialSolutions}

In this section we consider some explicit solutions of \eqref{generalODE}.

As was revealed in Section \ref{sec:Painleve}, the algebraic solutions of the Painlev\'e equations \eqref{D7equation},\eqref{D8equation} correspond to solutions 
\[ u(x) = e^{x/k}
\]
for $P(t) = t^k$. Although in Section \ref{sec:Painleve} the parameter $k$ equaled $3,4,6$, this solution exists for all $k \in \mathbb N_+$.

\medskip

We now consider the case $P(t) = t^2 + t$. In this case the studied exponential-polynomial series can be computed explicitly. Let us make the ansatz
\[ u(x) = \sum_{k=0}^{\infty} \frac{1}{k!}(x + H_k + c)e^{-kx}, \qquad H_k = \sum_{j=1}^k\frac{1}{j}.
\]
The polynomials $p_k(x) = \frac{1}{k!}(x + H_k)$ are all linear in this case. 

Clearly the series converges globally, as a power series in $e^{-x}$ with super-exponentially (in $k$) decaying coefficients. We have
\begin{align*}
    u'(x) + e^{-x}u &= \sum_{k=0}^{\infty} \frac{1}{k!}(1 - kx - kH_k - kc)e^{-kx} + \sum_{k=0}^{\infty} \frac{1}{k!}(x + H_k + c)e^{-(k+1)x} \\ &= 1 + \sum_{k=1}^{\infty} \frac{1}{(k-1)!}(k^{-1} - x - H_k - c)e^{-kx} + \sum_{k=1}^{\infty} \frac{1}{(k-1)!}(x + H_{k-1} + c)e^{-kx} = 1.
\end{align*} 
In this case \eqref{generalODE} can be obtained from the 1-st order linear ODE $u' + e^{-x}u = 1$. Indeed, we further get 
\[ u'' = e^{-x}u - e^{-x}u' = e^{-x}u + e^{-2x}u - e^{-x},
\]
\[ u''u - (u')^2 = e^{-x}(u^2 + u) - 1,
\]
and ODE \eqref{generalODE} with $P(t) = t^2 + t$ is satisfied. 

The solution can be more compactly written as
\[ u(x) = e^{\exp(-x)} \left( \gamma + c + \int_{e^{-x}}^{\infty} \frac{e^{-t}}{t}\,dt \right),
\]
where $\gamma$ is the Euler constant. Indeed, we have
\[ u' = -e^{-x}e^{\exp(-x)} \left( \gamma + c + \int_{e^{-x}}^{\infty} \frac{e^{-t}}{t}\,dt \right) + 1 = -e^{-x}u + 1,
\]
and hence recover the linear 1-st order ODE.

Let us check the asymptotic behaviour. For large $x$ we have
\begin{align*}
    u(x) &= \left( 1 + O(e^{-x}) \right) \left( \gamma + c + \int_{e^{-x}}^1 \frac{1}{t}\,dt - \int_{e^{-x}}^1 \frac{1-e^{-t}}{t}\,dt + \int_1^{\infty} \frac{e^{-t}}{t}\,dt \right) \\
    &= \left( 1 + O(e^{-x}) \right) \left( \gamma + c + x - \int_0^1 \frac{1-e^{-t}}{t}\,dt + O(e^{-x}) + \int_1^{\infty} \frac{e^{-t}}{t}\,dt \right) \\
    &= \left( 1 + O(e^{-x}) \right) \left( c + x + O(e^{-x}) \right) = x + c + O(xe^{-x}),
\end{align*} 
so the explicit solution is indeed identical to the series.

\section{Necessary conditions} \label{sec:main}

In this section we derive conditions on the polynomial $P$ in order for an expansion of the form \eqref{fullAsymptotics} to represent a solution of ODE \eqref{generalODE}. A precise formulation is given in Theorem \ref{thm:main} below. First we prove an auxiliary lemma.

For every integer $k \geq 1$ define linear operators $A_k,B_k$ on the space of polynomials $p(x)$, acting by
\[ A_kp = p' - kp, \qquad B_kp = x \cdot (p' - kp) - 2p.
\]
Define further the product $L_k = B_kA_k$.

\begin{lemma} \label{lem:imageOperator}
    Let $r(x) = \sum_{i=0}^{\deg\,r}\rho_ix^i$ be a polynomial. The equation $L_kp = r$ has a solution $p(x)$ if and only if $2\rho_2-2k\rho_1+k^2\rho_0 = 0$. In this case the solution is unique, and $\deg\,p = \deg\,r - 1$.
\end{lemma}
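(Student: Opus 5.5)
The plan is to use the factorization $L_k = B_kA_k$ and treat the two factors separately: first show that $A_k$ is a bijection on polynomials, then describe the image of $B_k$ and show that $B_k$ is injective. The solvability condition in the lemma will then come entirely from $B_k$.

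\emph{The operator $A_k$.} Since $k \geq 1$, the operator $A_kp = p' - kp$ maps a polynomial of degree $n$ to a polynomial of degree exactly $n$ with leading coefficient multiplied by $-k \neq 0$. So $A_k$ is injective and degree preserving. It therefore restricts to a bijection of each finite-dimensional space of polynomials of degree $\leq n$, and hence is a bijection of the whole polynomial space. Consequently $L_kp = r$ is solvable, or uniquely solvable, exactly when $B_kq = r$ is, with $q = A_kp$ and $\deg p = \deg q$.

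\emph{The operator $B_k$ in coefficients.} Write $q = \sum_i q_ix^i$. Then
\[ B_kq = \sum_i (i-2)q_ix^i - k\sum_i q_ix^{i+1}, \]
so the coefficients of $r = B_kq$ are
\[ \rho_0 = -2q_0, \qquad \rho_1 = -q_1 - kq_0, \qquad \rho_2 = -kq_1, \qquad \rho_i = (i-2)q_i - kq_{i-1} \ (i \geq 3). \]
From this we read off three facts.
\begin{itemize}
\item $B_k$ raises the degree by exactly one, since the top coefficient is $-kq_n \neq 0$. Hence $B_k$ is injective and $\deg q = \deg r - 1$, which gives both uniqueness and the degree claim.
\item Necessity of the condition: substituting the formulas for $\rho_0,\rho_1,\rho_2$ gives $2\rho_2 - 2k\rho_1 + k^2\rho_0 = -2kq_1 + 2kq_1 + 2k^2q_0 - 2k^2q_0 = 0$.
\item Sufficiency, by solving the recursion. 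Let $n = \deg r \geq 1$.
  \begin{itemize}
  \item Set $q_{n-1} = -\rho_n/k$.
  \item Determine $q_{i-1} = ((i-2)q_i - \rho_i)/k$ downward for $i = n, \dots, 3$, which fixes $q_2, \dots, q_{n-1}$. If $n \leq 2$ this step is empty and we take $q_i = 0$ for $i \geq n$.
  \item Set $q_0 = -\rho_0/2$ and $q_1 = -\rho_1 - kq_0$.
  \item The only remaining equation is $\rho_2 = -kq_1$. Substituting the values of $q_0$ and $q_1$, it is equivalent to $2\rho_2 - 2k\rho_1 + k^2\rho_0 = 0$, i.e. to the hypothesis.
  \end{itemize}
\end{itemize}
The case $r = 0$ trivially gives $p = 0$, and the degree statement is read with the usual convention.

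\emph{Main obstacle.} The step requiring care is the low-degree bookkeeping in the sufficiency argument. When $n \leq 2$ the downward recursion does not reach index $2$, so one must check separately that the hypothesis still makes the equations for $x^0$, $x^1$, $x^2$ consistent. For example, if $n = 1$ the hypothesis forces $\rho_1 = k\rho_0/2$, which makes $q_1 = 0$. Once $q$ is found, we take $p = A_k^{-1}q$, and this completes the proof.
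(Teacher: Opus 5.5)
Your proof is correct and takes essentially the same route as the paper. Both factor $L_k = B_kA_k$, use that $A_k$ is a degree-preserving bijection, and reduce the question to the image and injectivity of $B_k$. The paper then writes $q = x^2v + l$ with $l$ affine and uses $B_k(x^2v) = x^3A_kv$. This identity is the structural form of your observation that $\rho_2 = -kq_1$ does not involve $q_2$, and of your downward recursion, and it spares the paper the separate low-degree bookkeeping for $\deg r \leq 2$ that you had to do by hand.
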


\begin{proof}
The equation $L_kp = r$ has a solution if and only if $r$ is in the image of the operator $L_k$.

We have $\deg(A_kp) = \deg\,p$, $\deg(B_kp) = \deg\,p + 1$ for every non-zero polynomial $p$, hence if $p$ is a solution of $L_kp = r$, then $\deg\,r = \deg\,p + 1$.

For every $k \geq 1$ the operator $A_k$ is injective. Since it preserves the degree, it is invertible. Therefore the image of $L_k$ equals the image of $B_k$, and $L_k$ is injective if and only if $B_k$ is.

Now we find the image of the operator $B_k$. Let $q(x)$ be an arbitrary polynomial. We may in a unique way decompose it as $q = x^2 \cdot v + l$, where $l(x) = l_1x + l_0$ for some $l_0,l_1 \in \mathbb C$ and $v$ is a polynomial. We then obtain
\begin{align*}
    B_kq &= x((x^2v + l)' - k(x^2v + l)) - 2(x^2v + l) = x^3(v' - kv) + x(l_1 - k(l_1x+l_0)) - 2(l_1x+l_0) \\ &= x^3 \cdot A_kv - kl_1x^2 - kl_0x - l_1x - 2l_0.
\end{align*} 

Since $A_k$ is invertible, a polynomial $r$ is in the image of $B_k$ if and only if the remainder of $r$ modulo $x^3$ is. Equivalently, $(\rho_2,\rho_1,\rho_0) = (-kl_1,-kl_0-l_1,-2l_0)$ for some $l_1,l_0 \in \mathbb C$. This condition is in turn equivalent to the condition on the coefficients $\rho_i$ of $r$ formulated in the lemma.

For the same reason, $B_k$ is injective if and only if its restriction to the space of affine functions $l_1x+l_0$ is injective, which is evidently the case. Hence $L_k$ is injective and the solution of $L_kp = r$, if it exists, is unique.

This completes the proof.
\end{proof}

\begin{theorem} \label{thm:main}
    Let $c \in \mathbb C$ be arbitrary and let $u(x)$ be a solution of ODE \eqref{generalODE} admitting a formal series of the form \eqref{fullAsymptotics}, where $p_0(x) = x$, and $P(t) = \sum_{j=0}^{d} \pi_jt^j$ is a polynomial with $\pi_d \not= 0$. Then $\pi_0 = 0$ and $\pi_1 = \pi_2$. In particular, $d \geq 2$. The coefficients of the $p_k$ do not depend on $c$.
\end{theorem}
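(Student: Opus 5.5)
The plan is to substitute the series \eqref{fullAsymptotics} into \eqref{generalODE} and compare coefficients of $e^{-kx}$. I work in the variable $y = x + c$ and keep the factors $e^{-kx}$ separate; since $d/dx = d/dy$, no $c$ enters the recursion. Writing $u = \sum_k p_k(y)e^{-kx}$, the coefficient of $e^{-(i+j)x}$ in $u''u - (u')^2$ contributed by the pair $(i,j)$ is
\[ (A_i^2 p_i)\,p_j - (A_ip_i)(A_jp_j), \]
with the convention $A_0 p = p'$. The right-hand side $e^{-x}P(u) - 1$ contributes $-1$ at order $0$. At order $k \geq 1$ it contributes the coefficient of $e^{-(k-1)x}$ in $P(u)$, which involves only $p_0,\dots,p_{k-1}$.

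Isolating the terms containing $p_k$ (the pairs $(k,0)$ and $(0,k)$, using $p_0 = y$) gives
\[ yA_k^2p_k - A_kp_k - A_kp_k = \bigl(y(A_k\cdot) - 2\bigr)A_kp_k = L_kp_k. \]
So the coefficient identities take the form $L_kp_k = r_k$, where
\[ r_k = [e^{-(k-1)x}]P(u) - \sum_{i+j=k,\ i,j\geq 1}\bigl((A_i^2p_i)p_j - (A_ip_i)(A_jp_j)\bigr), \]
and $r_k$ depends only on $p_1,\dots,p_{k-1}$. The order-$0$ equation is $y\cdot 0 - 1 = -1$, which holds automatically. By Lemma \ref{lem:imageOperator}, each $p_k$ is uniquely determined by $r_k$ whenever it exists. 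By induction the $p_k$ are therefore independent of $c$, which proves the last claim.

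For the conditions on $P$, I will use that the solvability functional of Lemma \ref{lem:imageOperator} can be written as
\[ 2\rho_2 - 2k\rho_1 + k^2\rho_0 = \bigl((D-k)^2 r\bigr)(0) = (A_k^2 r)(0). \]
At $k=1$ we have $r_1 = P(y)$, so the condition reads $2\pi_2 - 2\pi_1 + \pi_0 = 0$. At $k=2$,
\[ r_2 = P'(p_0)\,p_1 - (A_1^2p_1)p_1 + (A_1p_1)^2. \]
Set $q = A_1p_1$, so that $A_1^2p_1 = A_1q$ and $r_2 = P'(y)p_1 - (A_1q)p_1 + q^2$. The condition $(A_2^2 r_2)(0) = 0$ then involves only the values of $p_1$, $q$, $P$ and their first few derivatives at $y=0$.

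From the proof of Lemma \ref{lem:imageOperator}, the residue of $q$ modulo $y^2$ is explicit: $q = y^2v + l$ with $l_0 = -\pi_0/2$ and $l_1 = -\pi_1 + \pi_0/2$, using $\pi_2 = \pi_1 - \pi_0/2$. The main obstacle is that $p_1 = A_1^{-1}q = -(q + q' + q'' + \dots)$ depends on all coefficients of $P$, so $p_1(0)$, $p_1'(0)$ and $p_1''(0)$ are not local data. I would handle this with the relation $p_1' = p_1 + q$: it expresses all derivatives of $p_1$ at $0$ in terms of the single unknown $\alpha = p_1(0)$ together with $q(0)$, $q'(0)$, $q''(0)$, the last of which equals $2v(0)$. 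Expanding $(A_2^2r_2)(0)$ should give an expression that is quadratic in these quantities. I expect the terms involving $\alpha$ and $v(0)$ to cancel or to factor, for instance through the relation $B_1q = P$ evaluated at order $y^2$, leaving a multiple of $\pi_0^2$ or of $\pi_0$ times a nonzero constant. This forces $\pi_0 = 0$, and the $k=1$ condition then gives $\pi_1 = \pi_2$. If a nonlocal term survives, I would instead test the identity with $P = t^2 + t$ from Section \ref{sec:specialSolutions}, where everything is explicit, to pin down the correct combination.

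Finally, if $d \leq 1$ then $\pi_2 = 0$, hence $\pi_1 = 0$ and $\pi_0 = 0$. This makes $P \equiv 0$, contradicting $\pi_d \neq 0$, so $d \geq 2$.
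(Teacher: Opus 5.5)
Your reduction to $L_kp_k = r_k$, the $c$-independence argument, the $k=1$ condition and the rewriting of the solvability functional as $((D-k)^2r)(0)$ are all fine and agree with the paper. The gap is the step you only anticipate at $k=2$, and it fails. If you carry out your own local computation, with $c_{1,0}=\alpha$, $c_{1,1}=\alpha+q(0)$ and $2c_{1,2}=\alpha+q(0)+q'(0)$, the unknowns $\alpha$ and $v(0)$ do drop out. But what remains is
\[ \Lambda_2 r_2 = 4(c_{1,0}-c_{1,1})(c_{1,0}-2c_{1,1}+2c_{1,2}) = 2\pi_0(\pi_0-\pi_1), \]
which is neither a nonzero constant times $\pi_0^2$ nor a nonzero constant times $\pi_0$. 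So the conditions at $k=1,2$ only give $\pi_0(\pi_0-\pi_1)=0$, and the branch $\pi_0=\pi_1\neq 0$ survives.

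This branch is not empty. For $P(t)=t^2+2t+2$ both conditions hold, and one gets the polynomial solutions $p_1 = x+2$ and $p_2=\frac12(x+3)$. Hence no argument that stops at $k=2$ can prove $\pi_0=0$. Your fallback check with $P=t^2+t$ is blind to this branch, since that polynomial already has $\pi_0=0$.

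The missing ingredient is the $k=3$ solvability condition, which is what the paper uses. In the example,
\[ r_3 = p_2P'+\tfrac12p_1^2P''-\bigl(p_1''p_2+p_2''p_1-2p_1'p_2'+2p_1'p_2-2p_2'p_1+p_1p_2\bigr)=\tfrac32x^2+\tfrac{11}{2}x+4. \]
This gives $\Lambda_3r_3 = 3-33+36 = 6\neq 0$, so $p_3$ does not exist.

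In general, the coefficients of $p_2$ that enter $\Lambda_3 r_3$ appear only in combinations that are low-order coefficients of $L_2p_2$. You can therefore replace them by the corresponding coefficients of $r_2$, which are expressed through the $c_{1,j}$. Combining the result with the $k=2$ condition, the paper obtains $6(c_{1,0}-c_{1,1})^3=\frac34\pi_0^3=0$. Only this third level forces $\pi_0=0$; after that, $\pi_1=\pi_2$ and $d\geq 2$ follow exactly as you say.
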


\begin{proof}
    Assume the conditions of the theorem. Let us plug expansion \eqref{fullAsymptotics} into ODE \eqref{generalODE}.

    On the left-hand side of \eqref{generalODE} we get
    \begin{align*}
        u''u - (u')^2 &= \sum_{k=0}^{\infty} (p''_ke^{-kx} - 2kp'_ke^{-kx} + k^2p_ke^{-kx}) \cdot \sum_{l=0}^{\infty} p_le^{-lx} - \sum_{k,l=0}^{\infty}(p_k'e^{-kx} - kp_ke^{-kx})(p'_le^{-lx} - lp_le^{-lx}) \\ &= \sum_{k,l=0}^{\infty} (p''_kp_l - 2kp'_kp_l + k^2p_kp_l - p'_kp'_l + lp'_kp_l + kp_kp'_l - klp_kp_l)e^{-(k+l)x} \\ &= \sum_{s=0}^{\infty} \sum_{k+l=s} (p''_kp_l - p'_kp'_l - 2(k-l)p'_kp_l + k(k-l)p_kp_l)e^{-sx}.
    \end{align*} 
    Here we used the symmetry between $k$ and $l$ in the sum. The polynomials $p_k$ are evaluated at $x + c$.

    On the right-hand side we get
\begin{align*}
    P(u) &= \sum_{j=0}^d \pi_j\left( \sum_{k=0}^{\infty} e^{-kx}p_k \right)^j = \sum_{j=0}^d \pi_j\sum_{k_1,\dots,k_j=0}^{\infty} e^{-(k_1+\dots+k_j)x}\prod_{i=1}^jp_{k_i} \\ &= \sum_{s=0}^{\infty} e^{-sx} \sum_{j=0}^d \pi_j\sum_{k_1+\dots+k_j=s} \prod_{i=1}^jp_{k_i} = \sum_{s=0}^{\infty} e^{-sx} \sum_{j=0}^d \pi_j \sum_{\substack{l_0+\dots+l_s=j \\ l_1+2l_2+\dots sl_s = s}} \frac{j!}{l_0!\dots l_s!}\prod_{i=0}^sp^{l_i}_i \\
    &= \sum_{s=0}^{\infty} e^{-sx} \sum_{j=0}^d \pi_j \sum_{l_0=0}^j \begin{pmatrix}
        j \\ l_0
    \end{pmatrix} (x+c)^{l_0} \sum_{\substack{l_1+\dots+l_s=j-l_0 \\ l_1+2l_2+\dots sl_s = s}} \frac{(j-l_0)!}{l_1!\dots l_s!}\prod_{i=1}^sp^{l_i}_i \\
    &= \sum_{s=0}^{\infty} e^{-sx} \sum_{j=0}^d \pi_j \sum_{l=0}^j \begin{pmatrix}
        j \\ l
    \end{pmatrix} (x+c)^{j-l} \sum_{\substack{l_1+\dots+l_s=l \\ l_1+2l_2+\dots sl_s = s}} \frac{l!}{l_1!\dots l_s!}\prod_{i=1}^sp^{l_i}_i \\
    &= \sum_{s=0}^{\infty} e^{-sx} \sum_{l=0}^d \left( \sum_{j=l}^d \pi_j j(j-1)\cdot \dots \cdot (j-l+1) (x+c)^{j-l} \right) \cdot \left( \sum_{\substack{l_1+\dots+l_s=l \\ l_1+2l_2+\dots sl_s = s}} \frac{1}{l_1!\dots l_s!}\prod_{i=1}^sp^{l_i}_i \right) \\
    &= \sum_{s=0}^{\infty} e^{-sx} \sum_{l=0}^s P^{(l)} \cdot \left( \sum_{\substack{l_1+\dots+l_s=l \\ l_1+2l_2+\dots sl_s = s}} \frac{1}{l_1!\dots l_s!}\prod_{i=1}^sp^{l_i}_i \right) \\
    &= P + e^{-x} p_1 P' + e^{-2x} \left(p_2P' + \frac{p_1^2}{2}P''\right) + e^{-3x} \left(p_3P' + p_1p_2P'' + \frac{p_1^3}{6}P'''\right) \\
    &+ e^{-4x} \left(p_4P' + \frac{p_2^2}{2}P'' + p_1p_3P'' + \frac{p_1^2p_2}{2}P''' + \frac{p_1^4}{24}P^{(IV)}\right) \\
    &+ e^{-5x} \left(p_5P' + p_2p_3P'' + p_1p_4P'' + \frac{p_1p_2^2}{2}P''' + \frac{p_1^2p_3}{2}P''' + \frac{p_1^3p_2}{6}P^{(IV)} + \frac{p_1^5}{120}P^{(V)}\right) + \dots.
\end{align*} 
Let us comment on the calculations above. In the second line, the $k_r$, $r = 1,\dots,j$, run from 0 to $s$. The $l_i$, $i = 0,\dots,s$, run from 0 to $\lfloor \frac{s}{i} \rfloor$ and count how many $k_r$ have the value $i$. For any given $(s+1)$-tuple $(l_0,\dots,l_s)$, there are $\frac{j!}{l_0!\dots l_s!}$ possibilities for the $k_r$ to distribute the available values among themselves. In the subsequent lines we exploit that $p_0(x+c) = x+c$, which allows to treat $l_0$ distinctly from the other $l_i$. In the 4-th line, we substituted $l = j - l_0$. In the 5-th line, we collect the coefficients at the powers of $x+c$. It turns out that these are exactly the coefficients of the derivatives $P^{(l)}$ of the polynomial $P$. In the 6-th line we replace summation over $l = 0,\dots,d$ by the equivalent summation over $l = 0,\dots,s$. This is possible because in the case $l > d$ the derivative $P^{(l)}$ is the zero polynomial, and in the case $l > s$ the set of $s$-tuples $(l_1,\dots,l_s)$ satisfying
\[ l = l_1 + \dots + l_s \leq l_1 + 2l_2 + \dots + sl_s = s
\]
is empty, so the summation is in both cases effectively over $l = 0,\dots,\min(d,s)$. Finally, in the last line the polynomial $P$ itself appears only in the first term because if $s > 0$, then $l_1 + 2l_2 + \dots + sl_s = s$ is possible only if at least one of the $l_i$ is non-zero, which entails $l > 0$.

Equating the expressions for the left- and right-hand side of \eqref{generalODE} and comparing the terms at the exponents $e^{-(s+1)x}$, we finally obtain
\begin{equation} \label{finalRelation}
    \sum_{k+l=s+1} (p''_kp_l - p'_kp'_l - 2(k-l)p'_kp_l + k(k-l)p_kp_l) = \sum_{l=0}^s P^{(l)} \cdot \left( \sum_{\substack{l_1+\dots+l_s=l \\ l_1+2l_2+\dots sl_s = s}} \frac{1}{l_1!\dots l_s!}\prod_{i=1}^sp^{l_i}_i \right)
\end{equation}
for every $s \geq 0$. Here both $p_k$ and its derivatives and $P$ and its derivatives are evaluated at $x+c$. However, since this is a formal identity between polynomials, it holds also if everything is evaluated at $x$. In particular, either \eqref{fullAsymptotics} is a formal asymptotic expansion of a solution of ODE \eqref{generalODE} for every $c \in \mathbb C$, or for no $c$ at all. In the first case the coefficients of $p_k$ do not depend on $c$.

For simplicity we shall further consider $p_k,P$ as functions of $x$ and drop $c$ from our consideration.

Relation \eqref{finalRelation} yields a recursive formula for $p_{s+1}$ as a function of $p_1,\dots,p_s,P$. Namely, we get by virtue of $p_0(x) = x$ that
\begin{align} \label{p_kRecursion}
    &x(p''_{s+1} - 2(s+1)p'_{s+1} + (s+1)^2p_{s+1}) - 2(p'_{s+1} - (s+1)p_{s+1}) = L_{s+1}p_{s+1} \\ &= \sum_{l=0}^s P^{(l)} \cdot \left( \sum_{\substack{l_1+\dots+l_s=l \\ l_1+2l_2+\dots sl_s = s}} \frac{1}{l_1!\dots l_s!}\prod_{i=1}^sp^{l_i}_i \right) - \sum_{\substack{k,l \geq 1 \\ k+l=s+1}} (p''_kp_l - p'_kp'_l - 2(k-l)p'_kp_l + k(k-l)p_kp_l). \nonumber
\end{align}

For $s = 0$ we get that the polynomial $p_1$ satisfies the relation 
\begin{equation} \label{Pfrom_p1}
L_1p_1 = P.    
\end{equation} 
By Lemma \ref{lem:imageOperator} a solution $p_1$ exists if and only if 
\begin{equation} \label{relation1}
    2\pi_2-2\pi_1+\pi_0 = 0.
\end{equation}

For convenience we denote the coefficients of $p_k$ by $c_{k,j}$, such that $p_k(x) = \sum_j c_{k,j}x^j$. Let us also define a linear functional $\Lambda_k$ on the space of polynomials $p(x) = \sum_j c_jx^j$ which acts by
\[ \Lambda_kp = 2c_2 - 2kc_1 + k^2c_0.
\]
Then by Lemma \ref{lem:imageOperator} the equation $L_kp = r$ has a solution if and only if $\Lambda_kr = 0$.

The next equation \eqref{p_kRecursion} (for $s = 1$) yields
\begin{equation} \label{p2p1relation}
    L_2p_2 = p_1P' - p_1''p_1 + (p_1')^2 = p_1^2 - 2p_1''p_1 + (p_1')^2 + xp_1(p_1''' - 2p_1'' + p_1'),
\end{equation} 
where we used that by virtue of \eqref{Pfrom_p1}
\[ P' = \left( x(p_1'' - 2p_1' + p_1) - 2(p_1' - p_1) \right)' = p_1 - p_1'' + x(p_1''' - 2p_1'' + p_1').
\]
We get the condition
\[ \Lambda_2(p_1^2 - 2p_1''p_1 + (p_1')^2 + xp_1(p_1''' - 2p_1'' + p_1')) = 0.
\]
After some calculus this amounts to 
\begin{equation} \label{prerelation2}
    4(c_{1,0} - c_{1,1})(c_{1,0} - 2c_{1,1} + 2c_{1,2}) = 0.
\end{equation}

The coefficients $\pi_j$ of $P$ depend on the coefficients $c_{1,j}$ of $p_1$ by virtue of \eqref{Pfrom_p1}, which amounts to
\begin{equation} \label{piFrom_p1}
    \pi_0 = 2c_{1,0} - 2c_{1,1}, \quad \pi_1 = c_{1,0} - 2c_{1,2}, \quad \pi_2 = c_{1,1} - 2c_{1,2}, \quad \pi_3 = c_{1,2} - 4c_{1,3} + 4c_{1,4}, \dots 
\end{equation}
Relation \eqref{prerelation2} then yields $\pi_0 = 0$ in the case $c_{1,0} - c_{1,1} = 0$ and $\pi_0 - \pi_1 = 0$ in the case $c_{1,0} - 2c_{1,1} + 2c_{1,2} = 0$. This finally yields the next necessary condition
\begin{equation} \label{relation2}
    \pi_0(\pi_0-\pi_1) = 0.
\end{equation}

Let us go one step further. For $s = 2$ recursion \eqref{p_kRecursion} yields
\begin{align*}
    L_3p_3 =\,& p_2P' + \frac{p_1^2}{2}P'' - (p_1p_2 + 2p_2p_1' + p_2p_1'' - 2p_1p_2' + p_1p_2'' - 2p_1'p_2') \\
=\,& x \cdot \left( p_2p_1' - 2p_2p_1'' + p_2p_1''' + \frac{p_1^2p_1''}{2} - p_1^2p_1''' + \frac{p_1^2p_1''''}{2} \right) + \\ &+ 2p_1p_2' - 2p_2p_1'' - 2p_2p_1' - p_1p_2'' + 2p_1'p_2' + p_1^2p_1' - p_1^2p_1''.
\end{align*} 
The necessary condition 
\[ \Lambda_3\left( x\left(p_2p_1' - 2p_2p_1'' + p_2p_1''' + \frac{p_1^2p_1''}{2} - p_1^2p_1''' + \frac{p_1^2p_1''''}{2}\right) + 2p_1p_2' - 2p_2p_1'' - 2p_2p_1' - p_1p_2'' + 2p_1'p_2' + p_1^2p_1' - p_1^2p_1'' \right) = 0
\]
simplifies after some calculations to
\begin{align*}
   & -6c_{1,0}(4c_{2,2} - 8c_{2,3} + 4c_{2,4}) - (2c_{1,2} - 4c_{1,1} - 9c_{1,0})(2c_{2,1} - 2c_{2,2}) + (4c_{1,2} - 6c_{1,1})(4c_{2,0} - 2c_{2,1}) + \\ &+ 9c_{1,0}^2c_{1,1} - 12c_{1,0}c_{1,1}^2 - 8c_{1,0}c_{1,2}^2 - 36c_{1,0}^2c_{1,2} + 84c_{1,0}^2c_{1,3} - 4c_{1,1}^2c_{1,2} - 144c_{1,0}^2c_{1,4} + 120c_{1,0}^2c_{1,5} + \\ &+ 2c_{1,1}^3 + 40c_{1,0}c_{1,1}c_{1,2} - 48c_{1,0}c_{1,1}c_{1,3} + 48c_{1,0}c_{1,1}c_{1,4} = 0.
\end{align*} 
In order to eliminate the coefficients $c_{2,j}$ of $p_2$, we may use relation \eqref{p2p1relation}, which yields
\begin{align*}
    4c_{2,2} - 8c_{2,3} + 4c_{2,4} &= 5c_{1,0}c_{1,3} + 5c_{1,1}c_{1,2} - 24c_{1,0}c_{1,4} - 12c_{1,1}c_{1,3} + 20c_{1,0}c_{1,5} + 8c_{1,1}c_{1,4} + 2c_{1,2}c_{1,3} - 4c_{1,2}^2, \\
2c_{2,1} - 2c_{2,2} &= c_{1,1}^2 - 2c_{1,2}c_{1,1} + 2c_{1,0}c_{1,2} - 6c_{1,0}c_{1,3}, \\
4c_{2,0} - 2c_{2,1} &= c_{1,0}^2 - 4c_{1,2}c_{1,0} + c_{1,1}^2.
\end{align*}
By substitution the necessary condition further simplifies to
\[ (c_{1,0} - c_{1,1})(3c_{1,0}c_{1,1} - 14c_{1,0}c_{1,2} + 10c_{1,1}c_{1,2} - 4c_{1,2}^2) = 0.
\]
Adding $\frac14(3c_{1,1} - 6c_{1,0} - 2c_{1,2})$ times the left-hand side of \eqref{prerelation2}, we finally obtain the necessary condition
\[ 6(c_{1,0} - c_{1,1})^3 = 0.
\]
The first relation in \eqref{piFrom_p1} then yields $\pi_0 = 0$, and entails also \eqref{relation2}. From \eqref{relation1} we then get $\pi_1 = \pi_2$, which proves the conditions on $P$ stated in the theorem.
\end{proof}

\begin{lemma} \label{lem:leadingCoefficient}
    Assume the conditions of the previous theorem. Then for all $k$ the degree of $p_k$ equals $d_k = k(d-2) + 1$, with leading coefficient given by $c_{k,d_k} = \frac{\pi_d^k \cdot \prod_{l=0}^{k-1}(1 + \frac{(d-2)l}{2})}{k!}$.
\end{lemma}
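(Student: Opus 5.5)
We argue by induction on $k$, using recursion \eqref{p_kRecursion} and keeping only the top-degree terms. Since $d \geq 2$ by Theorem \ref{thm:main}, we have $d_k = k(d-2)+1 \geq 1$. Write $a_k = c_{k,d_k}$ for the coefficient of $x^{d_k}$ in $p_k$; then $a_0 = 1$ and $d_0 = 1$ are consistent with $p_0 = x$.

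First we compute how $L_k$ acts on leading terms. For $p = ax^n$ one has $A_kp = -kax^n + \dots$ and $B_k q = -kbx^{n+1}+\dots$ for $q = bx^n+\dots$. Hence $L_kp = k^2 a x^{n+1} + \text{lower}$. Lemma \ref{lem:imageOperator} guarantees existence and uniqueness, and gives $\deg p_{s+1} = \deg r - 1$, where $r$ is the right-hand side of \eqref{p_kRecursion}. It therefore suffices to show, assuming the claim for $p_1,\dots,p_s$, that $\deg r = d_{s+1}+1 = (s+1)(d-2)+2$, and to compute the coefficient of $r$ in that degree. This coefficient divided by $(s+1)^2$ is $a_{s+1}$.

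Next we estimate degrees in $r$. A term $P^{(l)}\prod p_i^{l_i}$ with $\sum l_i = l$ and $\sum i l_i = s$ has degree at most $d - l + \sum l_i(i(d-2)+1) = d + s(d-2)$. This equals $d_{s+1}+1$, so every term of the $P$-part contributes at top degree, in general. A bilinear term $p_kp_l$ with $k+l = s+1$ and $k,l\geq 1$ has degree at most $(s+1)(d-2)+2$, which is the same top degree. The derivative terms $p_k''p_l$, $p_k'p_l'$, $p_k'p_l$ are of lower degree. Thus the top coefficient is
\[
\sum_{l=1}^{s} \frac{d!}{(d-l)!}\,\pi_d \sum_{\substack{l_1+\dots+l_s=l\\ \sum il_i=s}} \prod_i \frac{a_i^{l_i}}{l_i!} \;-\; \sum_{\substack{k,l\ge1\\k+l=s+1}} k(k-l)\,a_ka_l .
\]
Symmetrizing the last sum gives $\frac12\sum (k-l)^2 a_ka_l$. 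This yields the recursion
\[
(s+1)^2 a_{s+1} = \pi_d \,[t^s]\,(1+A(t))^d - \tfrac12\sum_{k+l=s+1,\;k,l\ge1}(k-l)^2a_ka_l, \qquad A(t)=\sum_{i\ge1}a_it^i,
\]
where we used that the inner sum over $l$ is precisely the coefficient of $t^s$ in $(1+A)^d$.

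The main obstacle is checking that the claimed closed form
\[
a_k = \pi_d^k\,\frac{\prod_{l=0}^{k-1}\bigl(1+\tfrac{(d-2)l}{2}\bigr)}{k!}
\]
satisfies this recursion. We would pass to generating functions, setting $F(t) = 1 + A(t) = \sum_k a_kt^k$ (with $a_0 = 1$). With $\theta = t\,d/dt$, the identity $\sum_{k+l=n}(k-l)^2a_ka_l$ corresponds to $2F\,\theta^2F - 2(\theta F)^2$, and the $k=0$ or $l=0$ terms are exactly those removed from the left side. The recursion then becomes the ODE
\[
F\,\theta^2F - (\theta F)^2 = \pi_d\, t\, F^d ,
\]
which is just the ODE \eqref{generalODE} at leading order with $u\approx$ top-degree part. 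The claimed coefficients form the binomial series $F(t) = (1 - \tfrac{d-2}{2}\pi_d t)^{-2/(d-2)}$ (for $d=2$ read this as $e^{\pi_d t}$). It remains to verify directly that this $F$ satisfies the ODE. In $\log$ form the ODE reads $\theta^2\log F = \pi_d tF^{d-2}$, which is an elementary check. Since the recursion determines $a_{s+1}$ uniquely, the induction closes. Finally, $a_{s+1}\neq 0$ whenever $\pi_d\neq0$, because all the factors $1+\frac{(d-2)l}{2}$ are positive, so the degree is exactly $d_{s+1}$.
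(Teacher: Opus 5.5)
Your proof is correct and follows essentially the same route as the paper: a degree bound from recursion \eqref{p_kRecursion} together with Lemma \ref{lem:imageOperator}, then reduction of the top-degree coefficients to the ODE $F\,\theta^2F-(\theta F)^2=\pi_d tF^d$ (the paper's ODE for $r(z)$ with $z=\pi_d t$), solved by a binomial series. Your version is more careful in a few places: you derive the coefficient recursion explicitly, including how the $k=0$ and $l=0$ terms account for the factor $(s+1)^2$ from $L_{s+1}$; you verify the closed form via $\theta^2\log F=\pi_d tF^{d-2}$; and you treat $d=2$ separately, where the paper's formula $(1-\frac{d-2}{2}z)^{-2/(d-2)}$ only makes sense as the limit $e^{z}$.
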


\begin{proof}
First we show that $\deg\,p_k \leq k(d-2)+1$.

From \eqref{Pfrom_p1} we get by virtue of Lemma \ref{lem:imageOperator} that $\deg\,p_1 = \deg\,P - 1 = d - 1$. For $k \geq 2$ we prove the claim by induction. 

Suppose $\deg\,p_k \leq k(d-2) + 1$ for $k \leq s$. Let us compute the degree of the polynomial on the right-hand side of \eqref{p_kRecursion}. For non-zero derivative $P^{(l)}$ we have
\[ \deg\,\left( P^{(l)} \cdot \prod_{i=1}^sp_i^{l_i} \right) \leq d-l + \sum_{i=1}^s l_i(i(d-2)+1) = d-l + (d-2)\sum_{i=1}^s il_i + \sum_{i=1}^s l_i = d-l + (d-2)s + l = 2+(d-2)(s+1).
\]
Here we used that $\sum_i l_i = l$, $\sum_i il_i = s$. Further, using $k + l = s + 1$ we get
\[ \deg\,(p''_kp_l - p'_kp'_l - 2(k-l)p'_kp_l + k(k-l)p_kp_l) = \deg\,p_k + \deg\,p_l \leq (k+l)(d-2)+2 = (s+1)(d-2)+2.
\]

Hence the degree of the right-hand side of \eqref{p_kRecursion} does not exceed $(s+1)(d-2)+2$. But then by virtue of Lemma \ref{lem:imageOperator} the degree of $p_{s+1}$ does not exceed $(s+1)(d-2)+1$, which proves our claim.

Now we shall compute the coefficient $\rho_k = c_{k,d_k}$ of $p_k$. From the preceding considerations it follows that the recursion \eqref{p_kRecursion} boils down to a recursion on the $\rho_k$, i.e., coefficients at smaller powers than $d_k$ in $p_k$ cannot influence $\rho_{s+1}$. 

More precisely, inserting the formal expansion \eqref{fullAsymptotics} into ODE \eqref{generalODE} and neglecting all terms $x^ke^{-sx}$ with $k < d_s$, we get
\[ \sum_{k,l=0}^{\infty} (k^2-kl)\rho_k\rho_lx^{d_k+d_l}e^{-(k+l)x} = e^{-x} \cdot \pi_d \cdot \left( \sum_{k=0}^{\infty} \rho_kx^{d_k}e^{-kx} \right)^d.
\]
Define $\tilde\rho_k = \frac{\rho_k}{\pi_d^k}$, then the relation becomes
\begin{align*}
    \sum_{k,l=0}^{\infty} (k^2-kl)\tilde\rho_k\tilde\rho_l\pi_d^{k+l}x^{(d-2)(k+l)+2}e^{-(k+l)x} &= e^{-x} \cdot \pi_d \cdot \left( \sum_{k=0}^{\infty} \tilde\rho_k\pi_d^kx^{(d-2)k+1}e^{-kx} \right)^d, \\
    x^2\sum_{k,l=0}^{\infty} (k^2-kl)\tilde\rho_k\tilde\rho_l\pi_d^{k+l}x^{(d-2)(k+l)}e^{-(k+l)x} &= e^{-x} \cdot \pi_d \cdot x^d \cdot \left( \sum_{k=0}^{\infty} \tilde\rho_k\pi_d^kx^{(d-2)k}e^{-kx} \right)^d, \\
    \sum_{k,l=0}^{\infty} (k^2-kl)\tilde\rho_k\tilde\rho_lz^{k+l} &= z \cdot \left( \sum_{k=0}^{\infty} \tilde\rho_kz^k \right)^d,
\end{align*} 
where $z = \pi_d \cdot x^{d-2} \cdot e^{-x}$. Recall that $\tilde\rho_0 = \rho_0 = 1$, and as a consequence $\tilde\rho_1 = 1$.

Define a formal power series $r(z) = \sum_{k=0}^{\infty} \tilde\rho_kz^k$. Then the above relation is equivalent to the ODE
\[ r(z) \cdot \left(z\frac{d}{dz}\right)^2r(z) - \left( z\frac{d}{dz}r(z) \right)^2 = zr(z)^d.
\]
Indeed, we have $z\frac{d}{dz}r(z) = \sum_{k=0}^{\infty} k\tilde\rho_kz^k$ and by analogy $\left(z\frac{d}{dz}\right)^2r(z) = \sum_{k=0}^{\infty} k^2\tilde\rho_kz^k$. The claim then easily follows.

In order to find the coefficients $\tilde\rho_k$, we have to find a solution of this ODE with $r(0) = 1 + z + O(z^2)$ and to compute its Taylor expansion. Such a solution is readily given by
\[ r(z) = \left( 1 - \frac{d-2}{2}z \right)^{-2/(d-2)} = \sum_{k=0}^{\infty} \frac{\prod_{l=0}^{k-1}\left( 1 + \frac{(d-2)l}{2} \right)}{k!}z^k.
\]
This shows that the coefficient $\rho_k$ has the value provided in the lemma.

Since $\rho_k \not= 0$ for all $k$, we have $\deg\,p_k = d_k$, as claimed.
\end{proof}

In particular, 
\begin{itemize}
    \item for $P(t) = t^3$ we get $d_k = k + 1$, $c_{k,k+1} = \frac{k+1}{2^k}$
    \item for $P(t) = t^4$ we get $d_k = 2k + 1$, $c_{k,2k+1} = 1$
    \item for $P(t) = t^6$ we get $d_k = 4k + 1$, $c_{k,4k+1} = \frac{(2k-1)!!}{k!}$
\end{itemize}

\section{Numerical experiments} \label{sec:experiments}

In this section we present the coefficients of the first few polynomials $p_k$ for $P(t) = t^d$, $d = 3,4,6$ and analyze the resulting partial sums of the expansion \eqref{fullAsymptotics}.

First we clarify how the convergence region of the series \eqref{fullAsymptotics} might look like. Numerical evidence suggests that the coefficients of the polynomials $p_k$ are bounded by an exponential in $k$. In this case the series is convergent in a region of the following shape.

\begin{lemma} \label{lem:convergenceRegion}
    Suppose that the coefficients of the polynomials $p_k(x) = \sum_{j=0}^{d_k} c_{k,j}x^j$ of degree $d_k = (d-2)k + 1$ are bounded by $|c_{k,j}| \leq e^{\alpha k}$ for some $\alpha \in \mathbb R$. Then the series \eqref{fullAsymptotics} converges in the domain
    \[ D = \{ x \in \mathbb C \mid \max(0,(d-2)\log|x+c|) < -\alpha+Re\,x \}.
    \] 
\end{lemma}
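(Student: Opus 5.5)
Throughout we set $y = x + c$, so that the $k$-th term of \eqref{fullAsymptotics} is $p_k(y)e^{-kx}$. The plan is to bound the series termwise by a geometric series in $k$ and apply the Weierstrass comparison test.

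First, the coefficient bound gives
\[ |p_k(y)| \le \sum_{j=0}^{d_k} |c_{k,j}|\,|y|^j \le e^{\alpha k}(d_k+1)\max(1,|y|)^{d_k}. \]
Write $M = \max(1,|y|)$. With $d_k = (d-2)k+1$ this becomes
\[ |p_k(y)e^{-kx}| \le \big((d-2)k+2\big)\, M \,\Big( e^{\alpha} M^{d-2} e^{-\mathrm{Re}\,x}\Big)^k . \]
Here we use $|e^{-kx}| = e^{-k\,\mathrm{Re}\,x}$.

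Second, set $q = e^{\alpha} M^{d-2} e^{-\mathrm{Re}\,x}$. Then $\log q = \alpha + (d-2)\log M - \mathrm{Re}\,x$. Since $\log M = \max(0,\log|y|)$, and assuming $d \ge 2$ (guaranteed by Theorem \ref{thm:main}), we have
\[ (d-2)\log M = \max\big(0,(d-2)\log|x+c|\big). \]
So $x \in D$ is exactly the condition $q < 1$.

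Third, the series $\sum_k ((d-2)k+2)Mq^k$ converges for $q<1$, because the polynomial factor is dominated by the geometric decay (root or ratio test). Hence \eqref{fullAsymptotics} converges absolutely at every point of $D$.

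The same estimate gives more. On any compact subset of $D$, both $q$ and $M$ are bounded, with $q$ bounded away from 1 by continuity. The convergence is therefore uniform there, and the sum is holomorphic on $D$. This last remark is optional.

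The only delicate point is translating $\max(1,|y|)^{d-2}$ into the expression defining $D$, which needs $d \ge 2$. The case $|x+c| < 1$ is absorbed by the $\max(0,\cdot)$. Note that $x = -c$ itself is harmless, since there $M = 1$. Otherwise the argument is routine.
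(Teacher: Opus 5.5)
Your proof is correct and follows essentially the same route as the paper: you bound $|p_k(x+c)|$ by $(d_k+1)e^{\alpha k}\max(1,|x+c|)^{d_k}$ and factor this into a term linear in $k$ times $\bigl(e^{\alpha-\mathrm{Re}\,x}\max(1,|x+c|)^{d-2}\bigr)^k$. You then compare with a geometric series, where the ratio being less than $1$ is exactly the condition defining $D$. Your explicit use of $d\ge 2$ and your remark on locally uniform convergence (hence holomorphy on $D$) are valid refinements that the paper leaves implicit.
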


\begin{proof}
    We have 
    \[ |p_k(x + c)| \leq e^{\alpha k}\sum_{j=0}^{d_k}|x+c|^j \leq (d_k+1)e^{\alpha k}\max(1,|x+c|^{d_k}).
    \]
    It follows that
    \begin{align*}
    |p_k(x+c)e^{-kx}| &\leq ((d-2)k+2)e^{\alpha k}\max(1,|x+c|^{(d-2)k+1})e^{-k \cdot Re\,x} \\ &= ((d-2)k+2)\max(1,|x+c|)\left( e^{\alpha-Re\,x}\max(1,|x+c|^{d-2}) \right)^k.
\end{align*} 
    Hence the series converges absolutely if $x$ satisfies the condition
    \[ e^{\alpha-Re\,x}\max(1,|x+c|^{d-2}) < 1.
    \]
    The claim of the lemma now readily follows.
\end{proof}

The actual domain of convergence of the series may be larger than the domain $D$ defined in the lemma. In particular, from the numerical experiments it can be observed that the coefficient $\alpha$ of the exponential growth rate (or sometimes rate of decline) of the coefficients $c_{k,j}$ actually depends on $\frac{j}{d_k} \in [0,1]$, with the maximum attained only at a single intermediate value. 

\subsection{Case $P(t) = t^3$}

As was established in Section \ref{sec:D7transform}, the solutions $y(t)$ of the Painlev\'e III equation \eqref{D7equation} can be obtained from solutions $u(x)$ of ODE \eqref{generalODE} with $P(t) = t^3$ by $y(t) = t \cdot u(-2\log t)$. Inserting expansion \eqref{fullAsymptotics}, we obtain
\begin{equation} \label{PainlevePower3series}
    y(t) = \sum_{k=0}^{\infty} p_k(-2\log\,t + c) \cdot t^{2k+1},
\end{equation} 
where $c \in \mathbb C$ is an arbitrary complex parameter. Table \ref{tableT3} contains the coefficients of the polynomials $p_k$, $k = 0,\dots,6$. 

\begin{table}[]
    \centering
    \begin{tabular}{c|cccccccc}
    $k$ & $x^0$ & $x^1$ & $x^2$ & $x^3$ & $x^4$ & $x^5$ & $x^6$ & $x^7$ \\
    \hline
    0 & 0 & 1 \\
    1 & 2 & 2 & 1 \\
    2 & $\frac52$ & 5 & 3 & $\frac34$ \\
    3 & $\frac{355}{81}$ & $\frac{247}{27}$ & $\frac{139}{18}$ & 3 & $\frac12$ \\
    4 & $\frac{1023025}{165888}$ & $\frac{649777}{41472}$ & $\frac{112283}{6912}$ & $\frac{20281}{2304}$ & $\frac52$ & $\frac{5}{16}$ \\
    5 & $\frac{443414657}{51840000}$ & $\frac{85778219}{3456000}$ & $\frac{1777981}{57600}$ & $\frac{8095}{384}$ & $\frac{1077}{128}$ & $\frac{15}{8}$ & $\frac{3}{16}$ \\
    6 & $\frac{31861250831}{2799360000}$ & $\frac{17521449581}{466560000}$ & $\frac{4499779717}{82944000}$ & $\frac{185225569}{4147200}$ & $\frac{627815}{27648}$ & $\frac{66097}{9216}$ & $\frac{21}{16}$ & $\frac{7}{64}$
    \end{tabular}
    \caption{Coefficients of the polynomials $p_k$ in the expansion \eqref{fullAsymptotics} for $P(t) = t^3$.}
    \label{tableT3}
\end{table}

If the coefficients of the polynomials $p_k$ are bounded by $e^{\alpha k}$, then Lemma \ref{lem:convergenceRegion} is applicable and the series \eqref{PainlevePower3series} converges in a domain of the form
\[
D = \{ t \in \mathbb C \setminus \{0\} \mid \max(1,|-2\log t+c|)\cdot |t|^2 < e^{-\alpha} \}.
\] 
If the phase of $t$ is bounded, then the domain contains a neighbourhood of the regular singular point $t = 0$.

\subsection{Case $P(t) = t^4$}

The solutions $y(t)$ of the Painlev\'e III equation \eqref{D8equation} can be obtained from solutions $u(x)$ of ODE \eqref{generalODE} with $P(t) = t^4$ by $y(t) = t \cdot u(-2\log t)^2$. Inserting expansion \eqref{fullAsymptotics}, we obtain
\begin{equation} \label{PainlevePower4series}
    y(t) = t \cdot \left( \sum_{k=0}^{\infty} p_k(-2\log\,t + c) \cdot t^{2k} \right)^2 = \sum_{s=0}^{\infty} \left( \sum_{k+l=s} p_k(-2\log\,t + c)p_l(-2\log\,t + c) \right) \cdot t^{2s+1},
\end{equation} 
where $c \in \mathbb C$ is an arbitrary complex parameter. Table \ref{tableT4} contains the coefficients of the polynomials $p_k$, $k = 0,\dots,4$.

\begin{table}[]
    \centering
    \begin{tabular}{c|cccccccccc}
    $k$ & $x^0$ & $x^1$ & $x^2$ & $x^3$ & $x^4$ & $x^5$ & $x^6$ & $x^7$ & $x^8$ & $x^9$ \\
    \hline
    0 & 0 & 1 \\
    1 & 8 & 8 & 4 & 1 \\
    2 & $\frac{145}{4}$ & $\frac{145}{2}$ & $\frac{129}{2}$ & $\frac{121}{4}$ & 8 & 1 \\
    3 & $\frac{55526}{243}$ & $\frac{50342}{81}$ & $\frac{81001}{108}$ & $\frac{9539}{18}$ & $\frac{477}{2}$ & $\frac{275}{4}$ & 12 & 1 \\
    4 & $\frac{736622003}{497664}$ & $\frac{614694323}{124416}$ & $\frac{52563371}{6912}$ & $\frac{49077907}{6912}$ & $\frac{79939}{18}$ & $\frac{31001}{16}$ & $\frac{1185}{2}$ & $\frac{493}{4}$ & 16 & 1 
    \end{tabular}
    \caption{Coefficients of the polynomials $p_k$ in the expansion \eqref{fullAsymptotics} for $P(t) = t^4$.}
    \label{tableT4}
\end{table}

Again, if $|c_{k,j}| \leq e^{\alpha k}$, Lemma \ref{lem:convergenceRegion} is applicable and the series \eqref{PainlevePower4series} converges in a domain of the form 
\[
D = \{ t \in \mathbb C \setminus \{0\} \mid \max(1,|-2\log t+c|)\cdot |t| < e^{-\alpha/2} \}.
\]

\subsection{Case $P(t) = t^6$}

The solutions $y(t)$ of the Painlev\'e III equation \eqref{D7equation} can be obtained from solutions $u(x)$ of ODE \eqref{generalODE} with $P(t) = t^6$ by $y(t) = 8t^{-1} \cdot u(-4\log t+9\log\,2)^{-2}$. Inserting expansion \eqref{fullAsymptotics}, we obtain
\begin{equation} \label{PainlevePower6series}
    y(t) = \frac{8}{t} \left( \sum_{k=0}^{\infty} p_k(-4\log t + 9\log\,2 + c) \cdot \frac{t^{4k}}{2^{9k}} \right)^{-2},
\end{equation} 
where $c \in \mathbb C$ is an arbitrary complex parameter. Table \ref{tableT6} contains the coefficients of the polynomials $p_k$, $k = 0,\dots,3$. 

\begin{table}[]
    \centering
    \begin{tabular}{c|ccccccccc}
    $k$ & $x^0$ & $x^1$ & $x^2$ & $x^3$ & $x^4$ & $x^5$ & $x^6$ & $x^7$ & $x^8$ \\
    \hline
    0 & 0 & 1 \\
    1 & 240 & 240 & 120 & 38 & 8 & 1 \\
    2 & 28233 & 56466 & 55026 & 34449 & 15312 & 4980 & 1192 & 206 & 24 \\
    3 & $\frac{3332576240}{729}$ & $\frac{3248595440}{243}$ & $\frac{1550215282}{81}$ & $\frac{1439221124}{81}$ & $\frac{321662771}{27}$ & $\frac{54792028}{9}$ & 2442162 & 780701 & 199656 \\
    \hline
    \hline
    k & $x^9$ & $x^{10}$ & $x^{11}$ & $x^{12}$ & $x^{13}$ \\
    \hline
    2 & $\frac32$ \\
    3 & 40548 & 6404 & 753 & 60 & $\frac52$
    \end{tabular}
    \caption{Coefficients of the polynomials $p_k$ in the expansion \eqref{fullAsymptotics} for $P(t) = t^6$.}
    \label{tableT6}
\end{table}

If the bound $|c_{k,j}| \leq e^{\alpha k}$ on the coefficients of the $p_k$ is valid, then by virtue of Lemma \ref{lem:convergenceRegion} the series converges in a domain
\[ \{ t \in \mathbb C \mid \max(1,|-4\log t + 9\log\,2 + c|) \cdot |t| < 2^{9/4} \cdot e^{-\alpha/4} \}.
\]

\section{Conjectures} \label{sec:outlook}

In the proof of Theorem \ref{thm:main}, we have established necessary conditions for equation \eqref{p_kRecursion} to have a polynomial solution $p_{s+1}$ for $s = 0,1,2$. Similar calculations show that at least for $s = 3,4$ no further necessary conditions appear. Numerical experiments show that the $p_s$ are indeed polynomials and can be computed accurately up to $s \approx 20$ for $d = 3$ and $s \approx 10$ for $d = 6$, at which point numerical instabilities become significant and the coefficients become imprecise. Symbolic computations for concrete examples of polynomials $P$ are able to show that system \eqref{p_kRecursion} remains solvable up to $s \approx 10$. This suggests the following conjecture.

\begin{conjecture} \label{conj1}
Let $P(t) = \sum_{j=0}^d \pi_jt^j$ be a polynomial satisfying the conditions $\pi_0 = 0$, $\pi_1 = \pi_2$, and set $p_0(x) = x$. Then for all $k \in \mathbb N_+$ there exist polynomials $p_k$ such that the formal series \eqref{fullAsymptotics} satisfies ODE \eqref{generalODE}.
\end{conjecture}

Numerical experiments also suggest the following conjectures.

\begin{conjecture}
Assume the conditions of Conjecture \ref{conj1}, and suppose in addition that $\pi_j \geq 0$ for all $j = 0,\dots,d$. Then the coefficients of all resulting polynomials $p_k$ are nonnegative.
\end{conjecture}

In particular, numerical evidence suggests that the coefficients in the expansions for the Painlev\'e transcendents corresponding to $P(t) = t^d$ with $d = 3,4,6$ are positive. This implies, in particular, that the partial sums of expansion \eqref{fullAsymptotics} can be used as lower bounds on $u(x)$ for $x + c \geq 0$, which implies corresponding bounds on the Painlev\'e transcendents.

Actually, the set of polynomials $P$ for which the coefficients of the $p_k$ are nonnegative may be larger than just the set given by $\pi_j \geq 0$. For example, for cubic polynomials $P(t) = t^3 + \beta(t^2 + t)$, where $\beta \in \mathbb R$, for decreasing $\beta < 0$ the first coefficient $c_{k,j}$ which drops below 0 is $c_{3,0} = \frac{355}{81} + \frac{4903}{972}\beta + \frac{1313}{648}\beta^2 + \frac{11}{36}\beta^3$. This suggests that the claim of the conjecture holds for $P$ with $\beta \geq \hat\beta \approx -1.9271798898$. 

\begin{conjecture}
    Assume the conditions of Conjecture \ref{conj1}. Then the conditions of Lemma \ref{lem:convergenceRegion} are satisfied and the expansion \eqref{fullAsymptotics} is actually a converging series in the domain specified by the lemma.
\end{conjecture}

We computed partial sums $\sum_{k=0}^N p_k(x)e^{-kx}$ for $P(t) = t^d$, $d = 3,4,6$ and different cut-offs $N$. Some results are presented in Figs.~\ref{fig:D3},\ref{fig:D4},\ref{fig:D6}, which depict these entire functions as complex colour plots. The convergence regions, where the partial sums are increasingly similar for growing $N$, are located right of the sequence of zeros.

\begin{figure}
    \centering
    \includegraphics[width=0.75\linewidth]{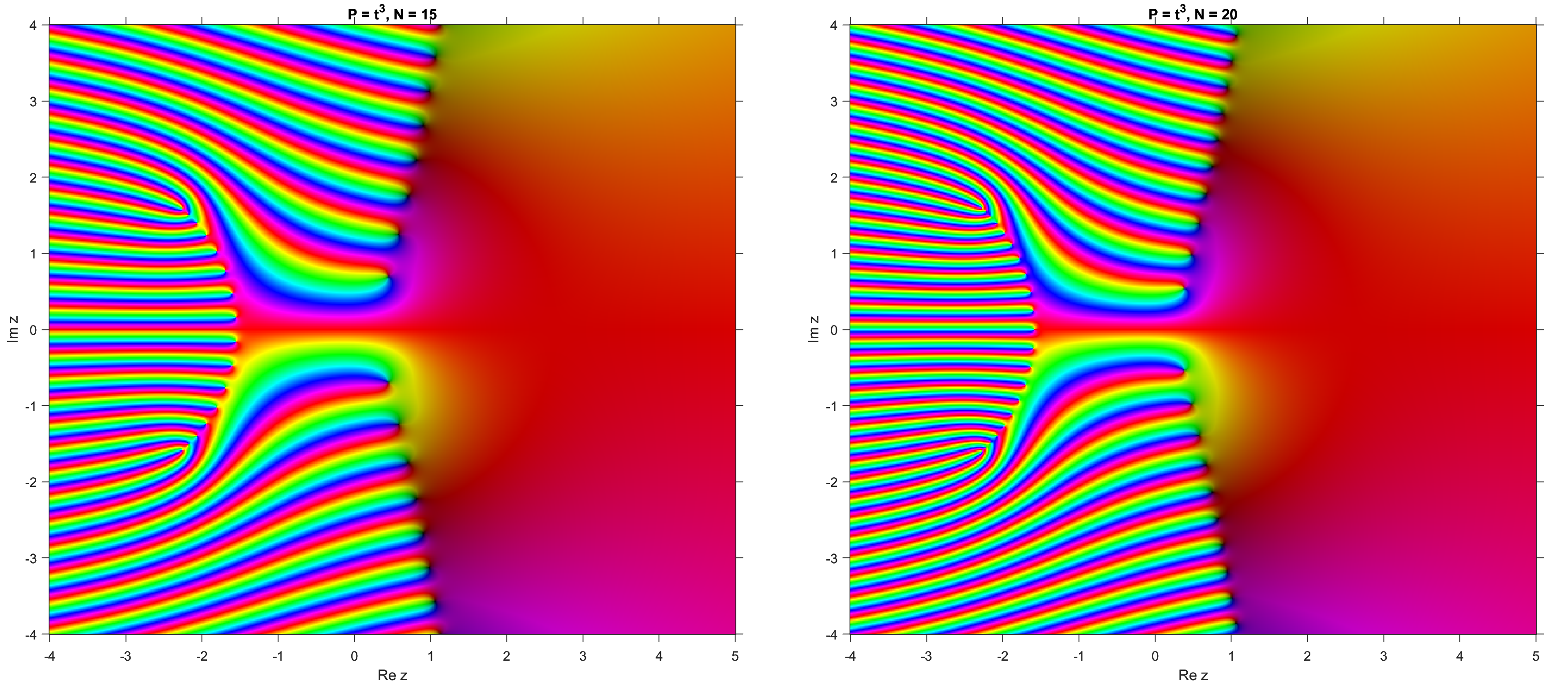}
    \caption{Partial sums of expansion \eqref{fullAsymptotics} for $P(t) = t^3$ and maximal summation index $N = 15,20$. Colour represents the phase, brightness the magnitude.}
    \label{fig:D3}
\end{figure}

\begin{figure}
    \centering
    \includegraphics[width=0.75\linewidth]{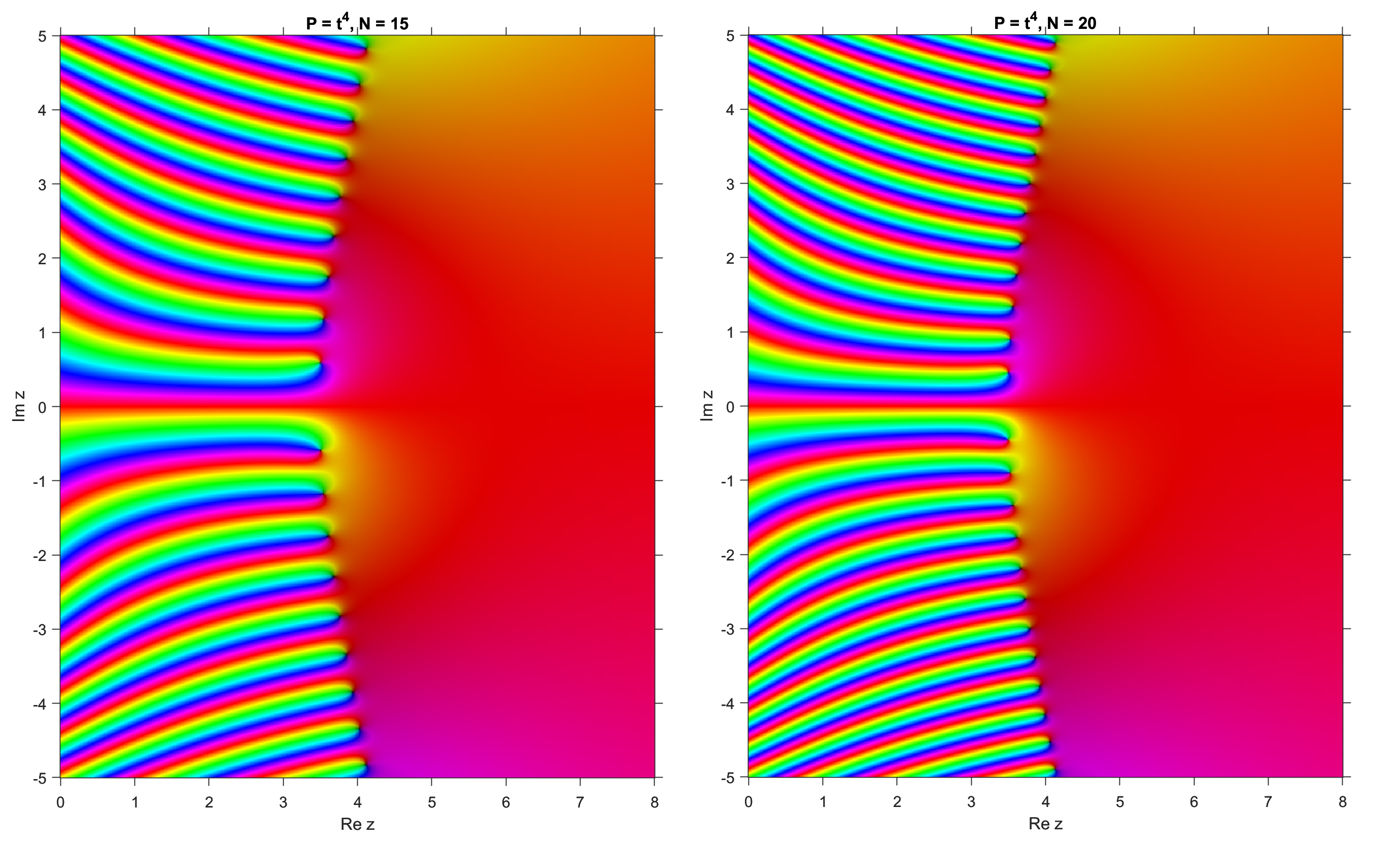}
    \caption{Partial sums of expansion \eqref{fullAsymptotics} for $P(t) = t^4$ and maximal summation index $N = 15,20$. Colour represents the phase, brightness the magnitude.}
    \label{fig:D4}
\end{figure}

\begin{figure}
    \centering
    \includegraphics[width=0.75\linewidth]{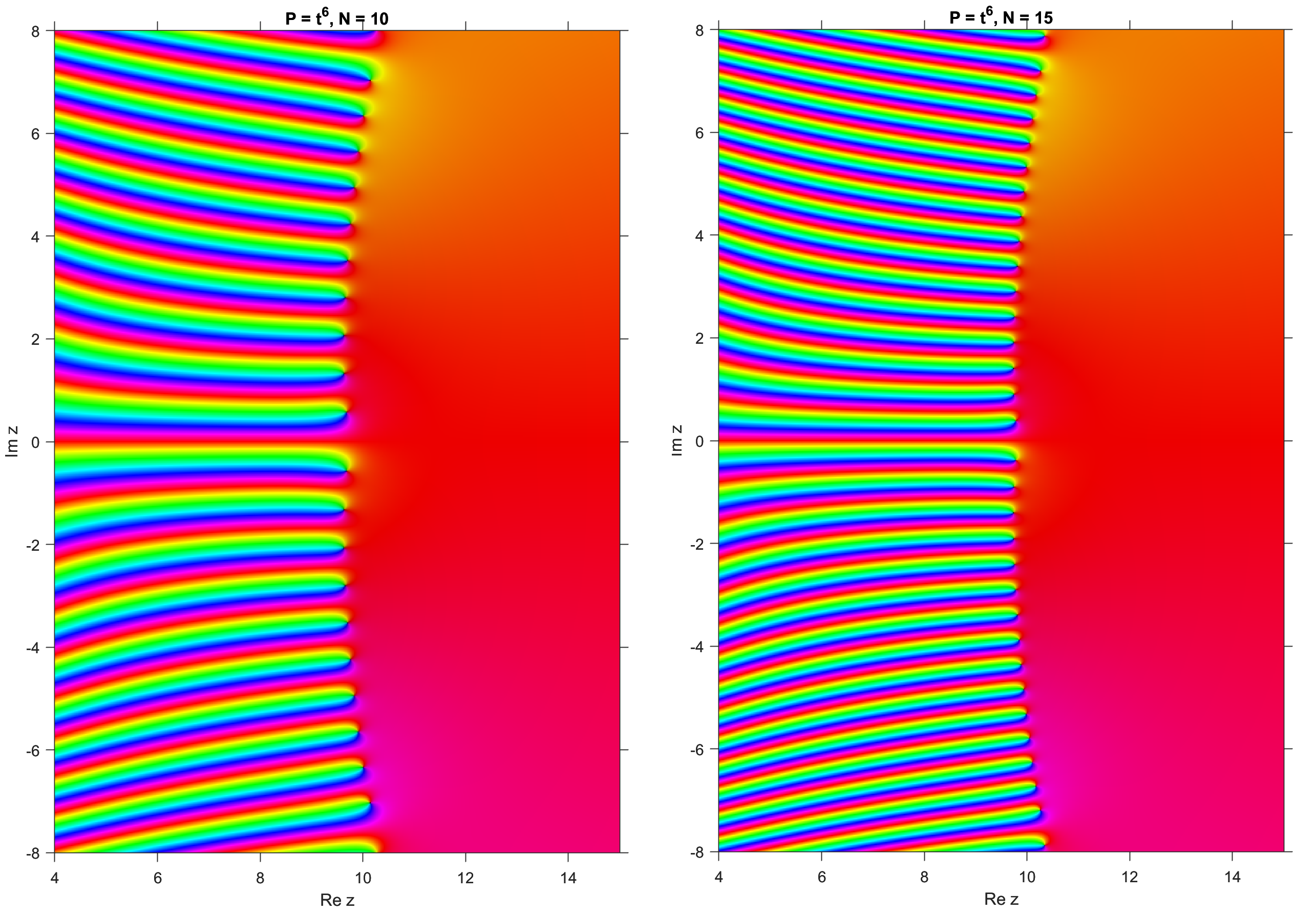}
    \caption{Partial sums of expansion \eqref{fullAsymptotics} for $P(t) = t^6$ and maximal summation index $N = 10,15$. Colour represents the phase, brightness the magnitude.}
    \label{fig:D6}
\end{figure}

\bibliographystyle{plain}
\bibliography{exponential_polynomial,Painleve}

\end{document}